\newcommand{\bdis}{\begin{displaymath}}
\newcommand{\edis}{\end{displaymath}}
\newcommand{\be}{\begin{equation}}
\newcommand{\ee}{\end{equation}}
\newcommand{\mcal}{\mathcal}
\newtheorem{lemma}[]{Lemma}
\theoremstyle{definition}
\newtheorem{cor}[]{Corollary}
\theoremstyle{remark}
\newtheorem{remark}[]{Remark}
\newtheorem*{mydef1}{{\bf Theorem}}
\newtheorem*{mydef2}{{\bf Definition}}
\numberwithin{equation}{section}
\begin{document}

\title{Jacob's ladders and the multiplicative asymptotic formula for short and microscopic parts of the Hardy-Littlewood integral}

\author{Jan Moser}

\address{Department of Mathematical Analysis and Numerical Mathematics, Comenius University, Mlynska Dolina M105, 842 48 Bratislava, SLOVAKIA}

\email{jan.mozer@fmph.uniba.sk}

\keywords{Riemann zeta-function}

\begin{abstract}
The elementary geometric properties of Jacob's ladders lead to a class of new asymptotic formulae for short and microscopic parts of the
Hardy-Littlewood integral. This class of asymptotic formulae cannot be obtained by methods of Balasubramanian, Heath-Brown and Ivic. \\

Dedicated to 110 anniversary of E.C. Titchmarsh.
\end{abstract}

\maketitle

\section{Formulation of the theorem}

In the papers \cite{4} and {5} I obtained the following additive formula

\be \label{1.1}
\int_T^{T+U}Z^2(t){\rm d}t=U\ln\left(\frac{\varphi(T)}{2}e^{-a}\right)\tan\left[\alpha(T,U)\right]+\mcal{O}\left(\frac{1}{T^{1/3-4\epsilon}}\right),
\ee
\bdis
U\in (0,U_0],\ U_0=T^{1/3+2\epsilon},\ a=\ln 2\pi-1-c ,
\edis
that holds true for short parts of the Hardy-Littlewood integral. In the present work I prove a multiplicative formula, which is asymptotic at
$T\to\infty$ also if $U\to 0$. Namely, the following theorem takes place

\begin{mydef1} \label{thm1}
\be \label{1.2}
\int_T^{T+U}Z^2(t){\rm d}t=U\ln T\tan[\alpha(T,U)]\left\{ 1+\mcal{O}\left(\frac{\ln\ln T}{\ln T}\right)\right\},\ U\in \left(\left. 0,\frac{T}{\ln T}
\right.\right],
\ee
for $\mu[\varphi]=7\varphi\ln\varphi$
\end{mydef1}

The main idea of the proof of the Theorem is to use the formula

\be \label{1.3}
Z^2(t)=\Phi^\prime[\varphi(T)]\frac{{\rm d}\varphi(T)}{{\rm d}T},\ \Phi^\prime=\Phi^\prime_\varphi ,
\ee
where
\be \label{1.4}
\Phi^\prime[\varphi]=\frac{2}{\varphi^2}\int_0^{\mu[\varphi]} te^{-\frac{2}{\varphi}t}Z^2(t){\rm d}t+Z^2\{ \mu[\varphi]\}e^{-\frac{2}{\varphi}\mu[\varphi]}
\frac{{\rm d}\mu[\varphi]}{{\rm d}\varphi} ,
\ee
(see \cite{4}, (3.5), (3.9)).

\begin{remark}
In the proof of the formula (\ref{1.2}) we shall get also the multiplicative formula
\be \label{1.5}
Z^2(T)=\frac{1}{2}\ln T\frac{{\rm d}\varphi(T)}{{\rm d}T}\left\{ 1+\mcal{O}\left(\frac{\ln \ln T}{\ln T}\right)\right\},\ T\to\infty .
\ee
\end{remark}

\begin{remark}
Our new method leads to asymptotic formulae (see, e.g. (2.3), (2.4)) for short and microscopic parts of the Hardy-Littlewood integral. These
asymptotic formulae cannot be derived within complicated methods of Balasubramanian, Heath-Brown and Ivic (see \cite{1} and estimates in \cite{3}, pp.
178 and 191).
\end{remark}

\section{Consequences of the Theorem}

\subsection{}

Firs of all, we will show a canonical equivalence that follows from (\ref{1.2}). Let us remind that we call the chord binding the points
\be \label{2.1}
\left[ T,\frac{1}{2}\varphi(T)\right],\ \left[ T+U_0,\frac{1}{2}\varphi(T+U_0)\right],\
\tan[\alpha(T,U_0)]=1+\mcal{O}\left(\frac{1}{\ln T}\right)
\ee
of the Jacob's ladder $y=\frac{1}{2}\varphi(T)$ \emph{the fundamental chord} (see \cite{4}).

\begin{mydef2}

The chord binding the points
\bdis
\left[ N,\frac{1}{2}\varphi(N)\right],\ \left[ M,\frac{1}{2}\varphi(M)\right],\ [N,M]\subset [T,T+U_0] ,
\edis
for which the property
\bdis
\tan[\alpha(N,M-N)]=1+o(1),\ T\to\infty
\edis
is fulfilled, is called \emph{the almost parallel chord} to the fundamental chord. This property we will denote by the symbol $\fatslash$.
\end{mydef2}

\begin{cor}
Let $[N,M]\subset [T,T+U_0]$. Then
\be \label{2.2}
\frac{1}{M-N}\int_N^M Z^2(t){\rm d}t\sim \ln T \ \Leftrightarrow\ \fatslash .
\ee
\end{cor}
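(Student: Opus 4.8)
The plan is to derive Corollary 2.2 directly from the main Theorem (formula \eqref{1.2}) by specializing the interval. The key point is that the hypothesis $[N,M]\subset[T,T+U_0]$ with $U_0=T^{1/3+2\epsilon}$ forces $M-N\leq U_0\ll T/\ln T$, so the Theorem applies with $T$ replaced by $N$ and $U$ replaced by $M-N$. This gives
\bdis
\int_N^M Z^2(t){\rm d}t=(M-N)\ln N\,\tan[\alpha(N,M-N)]\left\{1+\mcal{O}\left(\frac{\ln\ln N}{\ln N}\right)\right\}.
\edis
Dividing by $M-N$ and using that $\ln N=\ln T\{1+o(1)\}$ (since $N\in[T,T+U_0]$ and $U_0=o(T)$, indeed $\ln N-\ln T=\mcal{O}(T^{-2/3+2\epsilon})$), we obtain
\bdis
\frac{1}{M-N}\int_N^M Z^2(t){\rm d}t=\ln T\,\tan[\alpha(N,M-N)]\{1+o(1)\},\quad T\to\infty.
\edis

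From this identity the equivalence \eqref{2.2} is essentially immediate. For the implication $\fatslash\Rightarrow$ (the right-to-left direction), if $\tan[\alpha(N,M-N)]=1+o(1)$ then the displayed formula gives $\frac{1}{M-N}\int_N^M Z^2 = \ln T\{1+o(1)\}$, which is exactly $\sim\ln T$. Conversely, for $\Rightarrow\fatslash$, if the averaged integral is asymptotic to $\ln T$, then dividing the displayed identity by $\ln T$ yields $\tan[\alpha(N,M-N)]\{1+o(1)\}=1+o(1)$, hence $\tan[\alpha(N,M-N)]\to 1$, which is the defining property of the almost parallel chord. One should note here that $\tan[\alpha(N,M-N)]$ is bounded (it is, up to the $1+o(1)$ factor coming from $\ln N$ versus $\ln T$, the ratio of two nonnegative quantities and by \eqref{2.1}-type considerations stays near $1$ on the fundamental scale), so the $o(1)$ error term cannot be inflated — this is what makes the division legitimate.

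The main obstacle, such as it is, is bookkeeping rather than substance: one must check that the error term $\mcal{O}(\ln\ln T/\ln T)$ in \eqref{1.2}, together with the replacement of $\ln N$ by $\ln T$, truly collapses into a single $o(1)$ that is uniform over all admissible subintervals $[N,M]\subset[T,T+U_0]$. Since $M-N$ can be as small as a microscopic quantity, one has to make sure the asymptotic \eqref{1.2} is being invoked with a legitimate value of $U$, i.e.\ that nothing in the Theorem requires $U$ to be bounded below; the statement $U\in(0,T/\ln T]$ explicitly allows $U\to 0$, so this is fine. A secondary point worth a sentence is that the asymptotic relation $f\sim g$ in \eqref{2.2} must be read as $T\to\infty$ with $[N,M]$ varying inside $[T,T+U_0]$, matching the ``$T\to\infty$'' in the definition of $\fatslash$; once both sides are interpreted this way the equivalence is a one-line consequence of the boxed identity above.
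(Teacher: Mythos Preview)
Your proposal is correct and follows exactly the route the paper intends: the corollary is stated there as an immediate consequence of \eqref{1.2}, and you have filled in the natural details---apply the Theorem with $(T,U)$ replaced by $(N,M-N)$, replace $\ln N$ by $\ln T$ using $N\in[T,T+U_0]$, and read off the equivalence. Your caution about dividing by $1+o(1)$ is harmless but unnecessary: from $\tan[\alpha]\{1+o(1)\}=1+o(1)$ one gets $\tan[\alpha]\to 1$ directly without any a priori bound.
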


\begin{remark}
We see that the analytic property
\bdis
\frac{1}{M-N}\int_N^M Z^2(t){\rm d}t\sim \ln T
\edis
is equivalent to the geometric property $\fatslash$ of Jacob's ladder $y=\frac{1}{2}\varphi(T)$.
\end{remark}

\subsection{}

Next, for example, similarly to the case of the paper \cite{5}, Cor. 1, we obtain from our Theorem
\begin{cor}
There is continuum of intervals $[N,M]\subset [T,T+U_0]$ for which the following asymptotic formula
\be \label{2.3}
\int_N^M Z^2(t){\rm d}t=(M-N)\ln T\left\{ 1+\mcal{O}\left(\frac{\ln\ln T}{\ln T}\right)\right\} .
\ee
holds true.
\end{cor}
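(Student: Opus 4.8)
The plan is to feed the already-established Theorem (formula (\ref{1.2})) back on itself: (\ref{1.2}) tells us that for \emph{every} admissible $U$ the integral $\int_T^{T+U}Z^2(t)\,{\rm d}t$ equals $U\ln T\tan[\alpha(T,U)]$ up to the relative error $\mcal{O}(\ln\ln T/\ln T)$, so the content of (\ref{2.3}) is really the geometric statement that there is a continuum of sub-chords $[N,M]\subset[T,T+U_0]$ along which $\tan[\alpha(N,M-N)]=1+\mcal{O}(\ln\ln T/\ln T)$ — i.e. a continuum of chords that are, quantitatively, almost parallel to the fundamental chord in the sense of the Definition and Corollary 1. So the real work is produced by the mean-value / continuity behaviour of the function $U\mapsto\tan[\alpha(T,U)]$ on $(0,T/\ln T]$.

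First I would record, from (\ref{2.1}), that the fundamental chord over the whole interval $[T,T+U_0]$ has $\tan[\alpha(T,U_0)]=1+\mcal{O}(1/\ln T)$; combined with (\ref{1.2}) at $U=U_0$ this pins down the ``global'' value $\frac{1}{U_0}\int_T^{T+U_0}Z^2 = \ln T\{1+\mcal{O}(\ln\ln T/\ln T)\}$. Next I would split $[T,T+U_0]$ into a family of consecutive blocks and apply (\ref{1.2}) on each block $[N,M]$; writing the integral over $[T,T+U_0]$ as the sum of the block integrals and using the global value, one gets that the block-averaged values of $\tan[\alpha(N,M-N)]$ average to $1+\mcal{O}(\ln\ln T/\ln T)$. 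A pigeonhole/continuity argument (the map $M\mapsto \int_N^M Z^2$ is continuous and monotone in $M$, hence so is the chord slope as a function of the endpoint, up to the controlled error) then forces a set of endpoints of positive measure — in fact a continuum, exactly as in \cite{5}, Cor.~1 — for which $\tan[\alpha(N,M-N)]=1+\mcal{O}(\ln\ln T/\ln T)$ with the \emph{same} order of error constant as in the Theorem. Substituting this back into (\ref{1.2}) on those intervals yields (\ref{2.3}).

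Concretely the steps are: (i) invoke (\ref{1.2}) and (\ref{2.1}) to fix the global average over $[T,T+U_0]$; (ii) decompose $[T,T+U_0]$ and express the global average as a weighted average of local chord-slopes $\tan[\alpha(N,M-N)]$, each carrying the error $\mcal{O}(\ln\ln T/\ln T)$; (iii) use monotonicity and continuity of $M\mapsto\int_N^M Z^2(t)\,{\rm d}t$ (it is $C^1$ with derivative $Z^2(M)\ge 0$) to show the slope, viewed as a function of the right endpoint, takes the value $1+\mcal{O}(\ln\ln T/\ln T)$ on a set of positive one-dimensional measure — hence on a continuum of intervals $[N,M]$; (iv) re-insert into (\ref{1.2}) to obtain (\ref{2.3}). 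The parallel with \cite{5}, Cor.~1 (which the author explicitly cites) is what makes (i)–(iv) routine rather than delicate.

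The main obstacle I anticipate is step (iii): one must guarantee that the ``continuum'' really persists with the quantitative error $\mcal{O}(\ln\ln T/\ln T)$ rather than merely the qualitative $1+o(1)$ of the Definition, which requires keeping uniform control of the $\mcal{O}$-term in (\ref{1.2}) as $U$ ranges over the whole family of sub-intervals and checking that the averaging in step (ii) does not degrade it. Once the uniformity of the error constant in (\ref{1.2}) over $U\in(0,T/\ln T]$ is in hand (it is, since the Theorem is stated uniformly on that range), the continuum assertion is an elementary measure-theoretic consequence of continuity, exactly as in the cited companion paper, and no genuinely new estimate for $Z(t)$ or for $\varphi$ is needed.
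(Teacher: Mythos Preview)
Your reduction is the same as the paper's: apply the Theorem (\ref{1.2}) to an arbitrary sub-interval $[N,M]\subset[T,T+U_0]$ (noting $\ln N=\ln T+o(1)$), so that (\ref{2.3}) becomes the purely geometric assertion that a continuum of chords have $\tan[\alpha(N,M-N)]=1+\mcal{O}(\ln\ln T/\ln T)$, and then appeal to the argument of \cite{5}, Cor.~1. That is exactly what the paper does --- it gives no independent proof beyond the words ``similarly to the case of the paper \cite{5}, Cor.~1'' --- so at the level of strategy your proposal matches.

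Where you diverge slightly is in the mechanism for producing the continuum. Your steps (ii)--(iii) go through a block decomposition, an averaging identity for the local slopes, and a pigeonhole/measure argument. This works, but it is more machinery than the situation requires. The paper's accompanying Remark (``this follows from the elementary mean value theorem of differentiation'') points to the shorter route: the chord slope $\tan[\alpha(N,M-N)]=\tfrac{1}{2}\{\varphi(M)-\varphi(N)\}/(M-N)$ is a continuous function of the pair $(N,M)$, and at the fundamental chord $(T,T+U_0)$ it equals $1+\mcal{O}(1/\ln T)$ by (\ref{2.1}), which lies \emph{strictly inside} the wider band $1+\mcal{O}(\ln\ln T/\ln T)$. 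Hence the preimage of that open band is a non-empty relatively open set of pairs $(N,M)$ --- already a continuum --- with no averaging needed. Equivalently, writing $G(x)=\tfrac{1}{2}\varphi(x)-\sigma x$ with $\sigma$ the fundamental slope, one has $G(T)=G(T+U_0)$, and the level sets of $G$ furnish a one-parameter family of chords of slope exactly $\sigma$; letting the level approach an interior extremum of $G$ (Rolle) yields the microscopic intervals $0<M-N<1$ of the Remark. Your uniformity concern in the last paragraph is correctly identified and correctly dismissed: the $\mcal{O}$-constant in (\ref{1.2}) is uniform over $U\in(0,T/\ln T]$, which covers every sub-interval of $[T,T+U_0]$.
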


\begin{remark}
Especially, there is continuum of intervals $[N,M]:\ 0<M-N<1$ for which the asymptotic formula (\ref{2.3}) holds true (this follows from the elementary
mean value theorem of differentiation).
\end{remark}
And similarly to \cite{5}, Cor. 3, part A, we obtain from our Theorem

\begin{cor}

For every sufficiently big zero $T=\gamma$ of the function $\zeta(\frac{1}{2}+iT)$ there is continuum of intervals $[\gamma,U(\gamma,\alpha)]$ such that
the following is true
\be \label{2.4}
\int_\gamma^{\gamma+U(\gamma,\alpha)}Z^2(t){\rm d}t=U\ln\gamma \tan\alpha\left\{ 1+\mcal{O}\left(\frac{\ln\ln\gamma}{\ln\gamma}\right)\right\},
\ee
where $\tan\alpha\in [\eta,1-\eta]$, and $\alpha$ is the angle of the rotating chord binding the points
\bdis
\left[ \gamma,\frac{1}{2}\varphi(\gamma)\right],\ \left[ \gamma+U,\frac{1}{2}\varphi(\gamma+U)\right] ,
\edis
and $0<\eta$ is an arbitrarily small number.

\end{cor}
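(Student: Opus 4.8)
The plan is to deduce the Corollary from the Theorem (\ref{1.2}) by one application of the intermediate value theorem to the slope of the rotating chord, in the spirit of the proof of Cor.~3, part~A of \cite{5}. Fix a sufficiently large zero $T=\gamma$ of $\zeta(\tfrac12+iT)$, so that $Z(\gamma)=0$, and for $U\in(0,U_0]$ set
\[
g(U):=\tan[\alpha(\gamma,U)]=\frac{\varphi(\gamma+U)-\varphi(\gamma)}{2U},
\]
the slope of the chord binding $\left[\gamma,\tfrac12\varphi(\gamma)\right]$ and $\left[\gamma+U,\tfrac12\varphi(\gamma+U)\right]$. Since $\varphi$ is continuously differentiable (see (\ref{1.3})), $g$ is continuous on $(0,U_0]$.

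First I would locate the two ends of the range of $g$. As $U\to 0^+$ we get $g(U)\to\tfrac12\varphi^\prime(\gamma)$; but (\ref{1.3}) gives $Z^2(\gamma)=\Phi^\prime[\varphi(\gamma)]\varphi^\prime(\gamma)$, and since $Z(\gamma)=0$ while $\Phi^\prime[\varphi(\gamma)]$ is positive and finite, necessarily $\varphi^\prime(\gamma)=0$, hence $g(U)\to 0$ (this is just the formula (\ref{1.5}) read at $T=\gamma$). At the other end $U=U_0=\gamma^{1/3+2\epsilon}$, which is $\le\gamma/\ln\gamma$ for large $\gamma$, the fundamental-chord relation (\ref{2.1}) gives $g(U_0)=1+\mcal{O}(1/\ln\gamma)$, and this is $>1-\eta$ for all large $\gamma$. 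Thus the continuous function $g$ on the connected set $(0,U_0]$ has an image that is an interval containing values below $\eta$ and the value $g(U_0)\ge 1-\eta$; in particular the image contains the whole segment $[\eta,1-\eta]$.

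Consequently, for every prescribed $\tan\alpha\in[\eta,1-\eta]$ there is $U=U(\gamma,\alpha)\in(0,U_0)$ with $g(U(\gamma,\alpha))=\tan\alpha$, and then $U(\gamma,\alpha)\le U_0\le\gamma/\ln\gamma$, so the Theorem applies at $T=\gamma$ with this $U$. Since the preimages $g^{-1}(\tan\alpha)$ are nonempty and pairwise disjoint as $\tan\alpha$ runs over the nondegenerate interval $[\eta,1-\eta]$, they single out a continuum of admissible intervals $[\gamma,\gamma+U(\gamma,\alpha)]$. Substituting $T=\gamma$, $U=U(\gamma,\alpha)$ into (\ref{1.2}) and using $\tan[\alpha(\gamma,U(\gamma,\alpha))]=\tan\alpha$ then yields (\ref{2.4}). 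The one step needing care is the left-end behaviour of $g$: one must use that $\gamma$ is a genuine zero of $Z$, so that by (\ref{1.3})--(\ref{1.5}) the chord angle can really be pushed down to $\eta$, and also that the $\mcal{O}(1/\ln\gamma)$ in (\ref{2.1}) is, for all large $\gamma$, below the fixed margin $\eta$; the rest is the elementary continuity of the chord slope together with the Theorem already established.
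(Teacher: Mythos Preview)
Your proposal is correct and follows precisely the route the paper indicates: the paper does not give a separate proof but says the corollary is obtained from the Theorem ``similarly to \cite{5}, Cor.~3, part~A'', and your intermediate-value argument on the chord slope $g(U)=\tan[\alpha(\gamma,U)]$, using $\varphi'(\gamma)=0$ at a zero (via (\ref{1.3})/(\ref{1.5})) and the fundamental-chord relation (\ref{2.1}) at $U=U_0$, is exactly that argument written out in full. You have in fact supplied the details the paper leaves implicit.
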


\begin{remark}

For example, in the case $\alpha=\pi/6$ we have, as a special case of eq. (\ref{2.4}),
\bdis
\int_\gamma^{\gamma+U(\gamma,\pi/6)}Z^2(t){\rm d}t\sim \frac{1}{\sqrt{3}}U\ln\gamma .
\edis

\end{remark}

\begin{remark}

It is obvious that
\bdis
U(\gamma,\alpha)<T^{1/3+2\epsilon} .
\edis
Moreover, the following is also true
\bdis
U(\gamma,\alpha)<T^\omega,\ \omega\in \left.\left[ \frac{1}{4}+\epsilon, \frac{1}{3}+2\epsilon\right.\right) ,
\edis
(compare the Good's $\Omega$-Theorem \cite{2}), where $\omega$ can attain every value for which the formula
\bdis
\int_0^T Z^2(t){\rm d}t=T\ln T+(2c-1-\ln 2\pi)T+\mcal{O}(T^\omega)
\edis
will be proved.
\end{remark}

\section{An estimate for $\Phi^{\prime\prime}_{y^2}[\varphi]$}

The following lemma is true
\begin{lemma}
If $\mu[\varphi]=7\varphi\ln\varphi$ then
\be \label{3.1}
\Phi^{\prime\prime}_{y^2}[\varphi]=\mcal{O}\left(\frac{1}{\varphi}\ln\varphi\ln\ln\varphi\right) .
\ee
\end{lemma}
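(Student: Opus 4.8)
The plan is to differentiate the expression \eqref{1.4} for $\Phi'[\varphi]$ once more with respect to $\varphi$ and estimate each resulting term using the known mean-value estimates for $Z^2$ together with the bound $Z^2(t)=\mathcal{O}(t^{1/6+\epsilon})$ (or the weaker $\mathcal{O}(\ln^2 t)$-type bounds averaged) and the elementary identity $\mu[\varphi]=7\varphi\ln\varphi$, so that $\mu\sim 7\varphi\ln\varphi$, $\mathrm{d}\mu/\mathrm{d}\varphi=7\ln\varphi+7$, and $e^{-\frac{2}{\varphi}\mu[\varphi]}=e^{-14\ln\varphi}=\varphi^{-14}$, which is the crucial decaying factor that kills the boundary contributions. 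First I would write $\Phi'[\varphi]=A(\varphi)+B(\varphi)$ with $A(\varphi)=\frac{2}{\varphi^2}\int_0^{\mu[\varphi]}te^{-\frac{2}{\varphi}t}Z^2(t)\,\mathrm{d}t$ and $B(\varphi)=Z^2\{\mu[\varphi]\}e^{-\frac{2}{\varphi}\mu[\varphi]}\frac{\mathrm{d}\mu[\varphi]}{\mathrm{d}\varphi}$, and compute $A'$ and $B'$ separately.

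For $A'(\varphi)$ there are three contributions: the $\varphi$-derivative of the prefactor $2/\varphi^2$ gives $-\frac{4}{\varphi^3}\int_0^{\mu}te^{-\frac{2}{\varphi}t}Z^2\,\mathrm{d}t$, which by the first moment of $Z^2$ (namely $\int_0^{x}tZ^2(t)\,\mathrm{d}t$ being of size $x^2\ln x$, suitably damped by $e^{-2t/\varphi}$ so the effective range is $t\ll\varphi\ln\varphi$) is $\mathcal{O}(\frac1\varphi\ln\varphi\ln\ln\varphi)$ — this is exactly the claimed order; the differentiation of $e^{-\frac{2}{\varphi}t}$ inside the integral brings down $\frac{2t}{\varphi^2}$ and yields a term $\frac{4}{\varphi^4}\int_0^{\mu}t^2e^{-\frac{2}{\varphi}t}Z^2\,\mathrm{d}t$, which one estimates the same way via $\int_0^x t^2 Z^2(t)\,\mathrm{d}t\ll x^3\ln x$ with effective cutoff $x\ll\varphi\ln\varphi$, again giving $\mathcal{O}(\frac1\varphi\ln\varphi\ln\ln\varphi)$; and the Leibniz boundary term from the upper limit $\mu[\varphi]$ is $\frac{2}{\varphi^2}\mu e^{-\frac{2}{\varphi}\mu}Z^2\{\mu\}\frac{\mathrm{d}\mu}{\mathrm{d}\varphi}$, which carries the factor $\varphi^{-14}$ and is therefore utterly negligible even after using a crude pointwise bound $Z^2\{\mu\}\ll\mu^{1/3}$. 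For $B'(\varphi)$ the differentiation hits $Z^2\{\mu[\varphi]\}$ (producing $\frac{\mathrm{d}}{\mathrm{d}t}Z^2$ evaluated at $\mu$, times $(\mathrm{d}\mu/\mathrm{d}\varphi)^2$), the exponential $e^{-\frac{2}{\varphi}\mu}$ (producing a factor like $\frac{2\mu}{\varphi^2}-\frac{2}{\varphi}\frac{\mathrm{d}\mu}{\mathrm{d}\varphi}$), and $\mathrm{d}\mu/\mathrm{d}\varphi=7\ln\varphi+7$ (producing $\mathrm{d}^2\mu/\mathrm{d}\varphi^2=7/\varphi$); every one of these is multiplied by the surviving $e^{-\frac{2}{\varphi}\mu}=\varphi^{-14}$, so $B'(\varphi)=\mathcal{O}(\varphi^{-10})$, say, which is swallowed by the main term.

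Collecting, $\Phi''_{y^2}[\varphi]=A'(\varphi)+B'(\varphi)=\mathcal{O}\left(\frac1\varphi\ln\varphi\ln\ln\varphi\right)$, which is \eqref{3.1}. The main obstacle, and the only place real care is needed, is the estimation of the weighted moments $\frac{1}{\varphi^k}\int_0^{\mu}t^{k-1}e^{-\frac{2}{\varphi}t}Z^2(t)\,\mathrm{d}t$ for $k=2,3$: one must justify that the exponential weight effectively truncates the integral at $t\asymp\varphi\ln\varphi$ (the extra $\ln\varphi$ coming from the fact that $e^{-2t/\varphi}$ is not yet small at $t=\mu=7\varphi\ln\varphi$ — indeed $e^{-14\ln\varphi}=\varphi^{-14}$, so the tail between $\varphi$ and $\mu$ does contribute, but only an extra logarithmic factor), and then insert the Hardy–Littlewood-type asymptotics $\int_0^x Z^2(t)\,\mathrm{d}t = x\ln x + \mathcal{O}(x)$ via partial summation to pull out the $\ln\varphi\ln\ln\varphi$. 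I would handle this by splitting the range at $t=\varphi$ and at $t=\mu$, bounding the tail $t\in[\varphi,\mu]$ trivially using the decay of $e^{-2t/\varphi}$ against the polynomial growth of the truncated second moment, and treating the main range $t\in[0,\varphi]$ by partial integration against $\int_0^x Z^2$; no deeper input than the classical mean-value theorem for $\int_0^T Z^2$ is required.
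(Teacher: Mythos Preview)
Your approach is correct and is essentially the paper's own: differentiate \eqref{1.4}, kill every boundary term via the factor $e^{-2\mu/\varphi}=\varphi^{-14}$ (the paper collects these into a single $Q[\varphi]=\mathcal{O}(\varphi^{-13})$), and estimate the remaining integral contributions by splitting the range of integration and inserting the Hardy--Littlewood asymptotic \eqref{3.5}. The only cosmetic difference is that the paper recombines your two integral terms into the single expression $\frac{4}{\varphi^3}\int_0^{\mu} t\bigl(\tfrac{t}{\varphi}-1\bigr)e^{-2t/\varphi}Z^2(t)\,\mathrm{d}t$ and splits at $t=\varphi\ln\ln\varphi$ rather than at $t=\varphi$; that cut is slightly slicker, since on $[\varphi\ln\ln\varphi,\mu]$ the pointwise bound $g(t)\le\varphi(\ln\ln\varphi/\ln\varphi)^2$ finishes the tail in one line, whereas with your cut at $\varphi$ the tail $[\varphi,\mu]$ genuinely requires the dyadic or partial-summation argument you allude to (a single sup bound $t^k e^{-2t/\varphi}\le C\varphi^k$ there would lose a factor of $\ln\varphi$ and is not quite ``trivial'').
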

\begin{proof}
Since $\mu(y)=7y\ln y$ we have
\bdis
\mu(y)\to y=\varphi_\mu (T)=\varphi(T)=\varphi ,
\edis
see \cite{4} and (\ref{1.4}),
\be \label{3.2}
\Phi^{\prime\prime}_{y^2}[\varphi]=\frac{4}{\varphi^3}\int_0^{\mu[\varphi]}t\left(\frac{t}{\varphi}-1\right)e^{-\frac{2}{\varphi}t}
Z^2(t){\rm d}t+Q[\varphi],
\ee
\begin{eqnarray} \label{3.3}
& &
Q[\varphi]=e^{-\frac{2}{\varphi}\mu[\varphi]}\left\{\frac{2}{\varphi^2}Z^2\{\mu[\varphi]\}\mu[\varphi]\frac{{\rm d}\mu[\varphi]}{{\rm d}\varphi}
+\frac{2}{\varphi^2}Z^2\{\mu[\varphi]\}\frac{{\rm d}\mu[\varphi]}{{\rm d}\varphi}\right. \\ \nonumber
& &
\left. - \frac{2}{\varphi}Z^2\{\mu[\varphi]\}\left(\frac{{\rm d}\mu[\varphi]}{{\rm d}\varphi}\right)^2 +2Z\{\mu[\varphi]\}
Z^\prime_{\mu}\{\mu[\varphi]\}\left(\frac{{\rm d}\mu[\varphi]}{{\rm d}\varphi}\right)^2+
Z^2\{\mu[\varphi]\}\frac{{\rm d}^2\mu[\varphi]}{{\rm d}\varphi^2}\right\}.
\end{eqnarray}
Let
\bdis
g(t)=t\left(\frac{t}{\varphi}-1\right)e^{-\frac{2}{\varphi}t},\ t\in [0,\mu[\varphi]] .
\edis
We apply the following elementary facts
\begin{eqnarray} \label{3.4}
& &
g(0)=g(\varphi)=0,\quad  g'\left[\left(1-\frac{1}{\sqrt{2}}\right)\varphi\right]=g'\left[\left(1+\frac{1}{\sqrt{2}}\right)\varphi\right]=0, \nonumber \\
& &
\min\{g(t)\}=-\frac{1}{\sqrt{2}}\left(1-\frac{1}{\sqrt{2}}\right)e^{-2+\sqrt{2}}\varphi \\ \nonumber
& &
\max\{g(t)\}=\frac{1}{\sqrt{2}}\left(1+\frac{1}{\sqrt{2}}\right)e^{-2-\sqrt{2}}\varphi \\ \nonumber
& &
g(t)\leq g(\varphi\ln\ln\varphi)<\varphi\left(\frac{\ln\ln\varphi}{\ln\varphi}\right)^2,\ t\in [\varphi\ln\ln\varphi,7\varphi\ln\varphi],
\end{eqnarray}
and the Hardy-Littlewood formula (1918)
\be \label{3.5}
\int_0^TZ^2(t){\rm d}t\sim T\ln T,\ T\to\infty .
\ee
First of all we have
\begin{eqnarray} \label{3.6}
& &
\frac{4}{\varphi^3}\int_0^{\varphi\ln\ln\varphi} =\mcal{O}\left(\frac{1}{\varphi^2}\int_0^{\varphi\ln\ln\varphi}Z^2(t){\rm d}t\right)=
\mcal{O}\left(\frac{1}{\varphi}\ln\varphi\ln\ln\varphi\right), \\ \nonumber
& &
\frac{4}{\varphi^3}\int_{\varphi\ln\ln\varphi}^{7\varphi\ln\varphi}=\mcal{O}
\left\{ \frac{1}{\varphi^3}\varphi\left(\frac{\ln\ln\varphi}{\ln\varphi}\right)^2\varphi\ln^2\varphi\right\}=
\mcal{O}\left\{\frac{1}{\varphi}(\ln\ln\varphi)^2\right\}
\end{eqnarray}
by (\ref{3.4}), (\ref{3.5}). Next we have (see (\ref{3.3}))
\be \label{3.7}
Q[\varphi]=\mcal{O}(\varphi^{-13})\to 0,\ T\to\infty .
\ee
Finally, we obtain (\ref{3.1}) from (\ref{3.2}) by (\ref{3.6}) and (\ref{3.7}).

\end{proof}

\begin{remark}

It is quite evident that our Lemma (i.e. also our Theorem) is true for continual class of functions
\bdis
\mu[\varphi]=7\varphi^{\omega_1}\ln^{\omega_2}\varphi,\quad \omega_1,\omega_2\geq 1 .
\edis
\end{remark}

\section{Proof of the Theorem}

By (\ref{1.3}) we have
\bdis
\int_T^{T+U}Z^2(t){\rm d}t=\Phi^\prime_y[\varphi(t_1)]\int_T^{T+U}{\rm d}\varphi=\Phi^\prime_y[\varphi(t_1)]\{\varphi(T+U)-\varphi(T)\} ,
\edis
i.e.
\begin{eqnarray} \label{4.1}
& &
\int_T^{T+U}Z^2(t){\rm d}t=2U\Phi^\prime_y[\varphi(t_1)]\tan[\alpha(T,U)],\ t_1=t_1(U)\in (T,T+U), \\ \nonumber
& &
\tan[\alpha(T,U)]=\frac{1}{2}\frac{\varphi(T+U)-\varphi(T)}{U} .
\end{eqnarray}
Next we have
\be \label{4.2}
\int_T^{T+U_0}Z^2(t){\rm d}t=2U_0\Phi^\prime_y[\varphi(t_2)]\left\{ 1+\mcal{O}\left(\frac{1}{\ln T}\right)\right\},\
t_2=t_2(U_0)\in (T,T+U_0) ,
\ee
by (\ref{2.1}), (\ref{4.1}). Let us remind that $\varphi(T)/2\sim T$ by the formula
\be \label{4.3}
\pi(T)\sim\frac{1}{1-c}\left\{ T-\frac{\varphi(T)}{2} \right\},\ T\to\infty ,
\ee
(see \cite{4}, (6.2)). Hence by comparison of the formulae (\ref{1.1}) $U=U_0$ (see (\ref{2.1})) and (\ref{4.2}) we obtain
\be \label{4.4}
\Phi^\prime_y[\varphi(t_2)]=\frac{1}{2}\ln T+\mcal{O}(1) .
\ee
We have by (\ref{4.3})
\bdis
\varphi(t_1)-\varphi(t_2)=2(t_1-t_2)+\mcal{O}\left(\frac{T}{\ln T}\right)=\mcal{O}\left(\frac{T}{\ln T}\right),\ U\in
\left.\left( 0,\frac{T}{\ln T}\right.\right] ,
\edis
and subsequently
\be \label{4.5}
\Phi^\prime_y[\varphi(t_1)]-\Phi^\prime_y[\varphi(t_2)]=\mcal{O}\left\{ |\Phi^{\prime\prime}_{y^2}(T)|\cdot|\varphi(t_1)-\varphi(t_2)|\right\}=
\mcal{O}(\ln \ln T),
\ee
and therefore
\be \label{4.6}
\Phi^\prime_y[\varphi(t_1)]=\frac{1}{2}\ln T+\mcal{O}(\ln\ln T) ,
\ee
by (\ref{4.4}), (\ref{4.5}). Finally, (\ref{1.2}) follows (\ref{4.1}), (\ref{4.6}). \\
Similarly to (\ref{4.5}) we have
\be \label{4.7}
\Phi^\prime_y[\varphi(t_1)]-\Phi^\prime_y[\varphi(T)]=\mcal{O}(\ln\ln T).
\ee
Then we obtain (\ref{1.5}) by (\ref{4.6}), (\ref{4.7}).

\thanks{I would like to thank Michal Demetrian for helping me with the electronic version of this work.}

\end{document}